\documentclass[reqno]{amsart}

\usepackage{amsmath, amsfonts, amsthm, amssymb, color,  graphicx, mathrsfs, cite}
\usepackage{comment,stmaryrd}

\theoremstyle{plain}
\newtheorem{theorem}{Theorem}[section]
\newtheorem{lemma}[theorem]{Lemma}
\newtheorem{proposition}[theorem]{Proposition}
\newtheorem{corollary}[theorem]{Corollary}
\newtheorem{definition}[theorem]{Definition}

\newtheorem{example}[theorem]{Example}

\numberwithin{equation}{section}
\allowdisplaybreaks

\usepackage{ifpdf}
\ifpdf \usepackage[colorlinks=true, citecolor=blue, linkcolor=blue, urlcolor=blue]{hyperref} \fi

\def\thrm{\begin{theorem}}
\def\thrml#1{\begin{theorem}\label{#1}}
\def\ethrm{\end{theorem}}
\def\lmm{\begin{lemma}}
\def\lmml#1{\begin{lemma}\label{#1}}
\def\elmm{\end{lemma}}
\def\dfntn{\begin{definition}}
\def\dfntnl#1{\begin{definition}\label{#1}}
\def\edfntn{\end{definition}}
\def\crllr{\begin{corollary}}
\def\crllrl#1{\begin{corollary}\label{#1}}
\def\ecrllr{\end{corollary}}
\def\xmpl{\begin{example}}
\def\xmpll#1{\begin{example}\label{#1}}
\def\exmpl{\end{example}}
\def\nmrt{\begin{enumerate}}
\def\enmrt{\end{enumerate}}
\def\qtn{\begin{equation}}
\def\qtnl#1{\begin{equation}\label{#1}}
\def\eqtn{\end{equation}}
\def\prpstn{\begin{proposition}}
\def\prpstnl#1{\begin{proposition}\label{#1}}
\def\eprpstn{\end{proposition}}

\def\proof{{\bf Proof}.\ }
\def\eprf{\hfill$\square$}

\DeclareMathOperator{\aut}{Aut}

\def\lg{\langle}
\def\rg{\rangle}

\def\irr{{\rm Irr}}

\def\proof{{\bf Proof}.\ }
\def\eprf{\hfill$\square$}

\usepackage{geometry}
\begin{document}

\title{A counterexample to a conjecture of Moret\'o }
\maketitle

\begin{center}
{\author Gang Chen$^{a, *}$,  Wenhua Zhao$^{b}$}
\end{center}

\vskip 3mm

\begin{abstract} As an extension of the It$\hat{o}$-Michler theorem to the nonabelian or nonnormal  Sylow subgroups, in \cite[Conjecture ~4.4]{Mo1} A. Moret\'o raised the following conjecture: let $G$ be a finite group and $P$  a Sylow $p$-subgroup of $G$. Then the number of irreducible characters of $G$ whose degrees are divisible by $p$ is not less than the length of any subgroup chain between $N_G(P)$ and $G$. In this short note, we give a counterexample to this conjecture.

\vskip 2mm

{Keywords: It$\hat{o}$-Michler theorem, Irreducible character, Counterexample }
\end{abstract}

\small {2020 Mathematics Subject Classification:  20C15; 20D20.}
\date{}


\renewcommand{\thefootnote}{\empty}
\footnotetext{$^{a, *}$ Corresponding author,  Email: {chengangmath@mail.ccnu.edu.cn},  School of Mathematics and Statistics,  Hainan University, Haikou 570228, China}
\footnotetext{ $^{b}$ Email:  {zwh1999@mails.ccnu.edu.cn},  School of Mathematics and Statistics, Central China Normal University, Wuhan 430079, China.}

\section{introduction}

Throughout this note, $G$ is a finite group, $p$ is a prime, and $\irr(G)$ is the set of complex irreducible characters of $G$. The set of irreducible characters of $G$  whose degrees are divisible by $p$ is denoted by $\irr_p(G)$. Let $P$ be a Sylow $p$-subgroup of $G$.  The chain length of the longest strictly ascending subgroup chain between $N_G(P)$ and $G$ is denoted by $l_p(G)$.

\medskip

The following is the well-known It$\hat{o}$-Michler theorem.

\begin{theorem}[{\rm It$\hat{o}$-Michler}]\label{1128u}
	Let $G$ be a finite group. Then $G$ has a normal abelian Sylow $p$-subgroup iff $|\irr_p(G)|=0$.
	
\end{theorem}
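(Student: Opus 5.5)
The statement has two directions of very different depth, and I would treat them separately.

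\emph{If $P\trianglelefteq G$ is abelian, then $\irr_p(G)=\emptyset$.} This is Itô's theorem and follows from Clifford theory. Let $\chi\in\irr(G)$ and let $\lambda$ be a linear constituent of $\chi_P$; such a $\lambda$ exists because $P$ is abelian. By Clifford's theorem $\chi(1)=e\,t\,\lambda(1)=et$, where $t=|G:I_G(\lambda)|$. Since $P\le I_G(\lambda)$, the index $t$ is prime to $p$. For the ramification index $e$, I would use the standard fact that $\chi(1)$ divides $|G:A|$ for any abelian normal subgroup $A$. One route is Itô's argument via central characters and algebraic integers. Another is to note that $e$ divides $|I_G(\lambda):P|$ by projective representation theory. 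With $A=P$, the index $|G:P|$ is prime to $p$, so $p\nmid\chi(1)$.

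\emph{If $\irr_p(G)=\emptyset$, then $P$ is normal and abelian.}

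1. Restrict to $P$. Since $|G:P|$ is prime to $p$, for any $\theta\in\irr(P)$ some constituent $\chi$ of $\theta^G$ satisfies $p\nmid\chi(1)$ automatically; this alone does not give abelianness. I would first prove $P\trianglelefteq G$ and then deduce abelianness.

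2. Abelianness once $P\trianglelefteq G$. Take $\theta\in\irr(P)$ and $\chi\in\irr(G|\theta)$. Clifford theory gives $\chi(1)=e\,t\,\theta(1)$. Since $\theta(1)$ is a power of $p$ and $p\nmid\chi(1)$, we get $\theta(1)=1$. Hence every irreducible character of $P$ is linear, so $P$ is abelian.

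3. Normality of $P$. This is Michler's direction, and it is the main obstacle, because it relies on the classification of finite simple groups. I would take a minimal counterexample $G$ and pass to quotients, which is valid since $\irr(G/N)\subseteq\irr(G)$. For a minimal normal subgroup $N$, the quotient $PN/N$ is normal in $G/N$. If $N$ is a $p$-group or a $p'$-group, standard arguments (Schur--Zassenhaus, Glauberman correspondence for $p'$-sections) reduce to smaller cases. In the remaining case $N$ is a direct product of nonabelian simple groups of order divisible by $p$. Here I would invoke the CFSG-based theorem that every nonabelian simple group $S$ with $p\mid |S|$ has an irreducible character of degree divisible by $p$ that extends to its automorphism group, or at least one whose inertia behaviour forces a $p$-divisible degree in $G$. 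Via Clifford theory this produces a member of $\irr_p(G)$, giving a contradiction. Verifying this case-by-case for alternating, Lie-type (using Steinberg characters for the defining characteristic) and sporadic groups is the hard part.

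Since this theorem is classical, in the note I would simply cite Itô and Michler rather than reprove step 3.
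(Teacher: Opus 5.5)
The paper does not prove this statement: it quotes it as the well-known It\^{o}--Michler theorem, purely as background. So your decision to cite It\^{o} and Michler is exactly the paper's treatment, and your outline is the standard proof: It\^{o}'s $\chi(1)\mid |G:A|$ for the forward direction, and for the converse, normality of $P$ by a minimal-counterexample reduction to the CFSG fact about nonabelian simple groups, then abelianness by Clifford theory.

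Two small corrections to the sketch. First, the aside in your step 1 holds only for linear $\theta$; it fails for $G=P$ nonabelian with $\theta$ nonlinear, though you never use it. Second, in step 3 no extendibility to $\aut(S)$ and no inertia condition is needed. For $\theta\in\irr(N)$ with $N\trianglelefteq G$, Clifford theory gives $\theta(1)\mid\chi(1)$ for every $\chi\in\irr(G|\theta)$. So any $\theta=\psi\times 1\times\cdots\times 1$ with $p\mid\psi(1)$ already yields the contradiction.
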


\medskip

As an extension of the It$\hat{o}$-Michler theorem to the nonabelian or nonnormal Sylow subgroups, A. Moret\'o raised the following conjecture; see \cite[Conjecture 4.4]{Mo1}.

\medskip

{\bf Conjecture.}\, {\it Let $G$ be a finite group, $p$ be a prime, and $P$ a Sylow $p$-subgroup of $G$. If $G$ has $n$ irreducible characters of degree divisible by $p$, then the length of any subgroup chain between $N_G(P)$ and $G$ is at most $n$. }

\medskip

It was proved that if $|\irr_p(G)|=1$, then $N_G(P)$ is maximal in $G$ in \cite{GGLMNT}. Hence, the above conjecture holds in this case. Our starting point is to consider this conjecture where $|\irr_p(G)|=2$ and the counterexample was found in this case.  The goal of this short note is to give a counterexample to this conjecture with the help of OpenAI's ChatGPT (GPT-6).

\medskip 

In the upcoming manuscript of the authors, it will be proved that the conjecture has an affirmative answer if $G$ is supersolvable or $G$ is a solvable group with $|\irr_p(G)|=2$ and Fitting height not more than $2$.

\medskip

\section{counterexample and proofs}

Our strategy in the proof can be described as follows. 

Let 
$$
1=N_0\vartriangleleft N_1\vartriangleleft N_2\vartriangleleft \cdots \vartriangleleft N_{r-1}\vartriangleleft N_r=G
$$
be a normal subgroup chain of $G$. The chain length of the longest strictly ascending subgroup chain between $N_G(P)N_i$ and $N_G(P)N_{i+1}$ is denoted by $l(N_G(P)N_i,N_G(P)N_{i+1})$.  One can easily check that
$$
l_p(G)=\sum\limits_{i=0}^{r-1}l(N_G(P)N_i,N_G(P)N_{i+1}).
$$
If we could prove that 
\begin{equation}\label{1359}
l(N_G(P)N_i,N_G(P)N_{i+1})\leqslant |\irr_p(G/N_{i}|/N_{i+1}/N_i)|, 
\end{equation}
for $i=1, \cdots, r-1$, then the original conjecture could be true. 

\medskip

 However, as we shall see that \eqref{1359} does not hold always. 
Suppose $M$ and $N$ are normal subgroups of $G$ satisfying $M\subseteq N$ and 
$$
l(N_G(P)M,N_G(P)N)>|\irr_p(G/M|G/N)|
$$
then  $N/M$ is called a \textbf{bad factor} of $G$ with respect to  $p$. In our construction, $|\irr_p(G)|=2$, $l_p(G)=3$, and $G$ has two bad factors which are provided by distinct factors in one normal subgroup chain of $G$. 

\medskip

In the rest part of this note,  let $F=GF(8)$ be the Galois field of order $8$, whose multiplicative group is denoted by $F^\times$. It is well known that the map $\sigma$ defined by  $\alpha\mapsto \alpha^2$ is an automorphism of $F$. Set $\tau=\sigma^2$, which is equal to $\sigma^{-1}$.
Let $U=\{(a,b)|a\in F, b\in F\}$.  For each $(a,b),(c,d)\in U$,  the multiplication of $U$ is defined in the following way:

\begin{equation}\label{1545r}
(a,b)(c,d)=(a+c,b+d+a^\tau c)
\end{equation}

\medskip

\begin{lemma}\label{1534c}
	With respect to the multiplication defined in \eqref{1545r}, $U$ is a group.
	\end{lemma}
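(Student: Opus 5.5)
We verify the group axioms directly from \eqref{1545r}. The only algebraic facts needed are that addition in $F$ is associative and commutative with every element its own additive inverse (characteristic $2$), and that $\tau$ is a field automorphism. In particular $\tau$ is additive, $(a+c)^\tau=a^\tau+c^\tau$, and $0^\tau=0$. Closure is immediate, since both coordinates of the product lie in $F$.

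\textbf{Associativity.} We expand both sides for $(a,b),(c,d),(e,f)\in U$. On the left,
\[
\bigl((a,b)(c,d)\bigr)(e,f)=(a+c,\,b+d+a^\tau c)(e,f)=\bigl(a+c+e,\; b+d+f+a^\tau c+(a+c)^\tau e\bigr).
\]
On the right,
\[
(a,b)\bigl((c,d)(e,f)\bigr)=(a,b)(c+e,\,d+f+c^\tau e)=\bigl(a+c+e,\; b+d+f+c^\tau e+a^\tau(c+e)\bigr).
\]
By additivity of $\tau$ and distributivity in $F$, both second coordinates equal $b+d+f+a^\tau c+a^\tau e+c^\tau e$. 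This is the only step that uses anything about $\tau$: it needs additivity, not multiplicativity.

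\textbf{Identity.} Using $0^\tau=0$, we get $(0,0)(c,d)=(c,\,d+0^\tau c)=(c,d)$ and $(a,b)(0,0)=(a,\,b+a^\tau\cdot 0)=(a,b)$, so $(0,0)$ is a two-sided identity.

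\textbf{Inverses.} We look for $(a,b)^{-1}=(x,y)$. The first coordinate forces $x=-a=a$. The second coordinate then requires $b+y+a^\tau a=0$, that is, $y=b+a^{\tau+1}$ in characteristic $2$. So the right inverse is $(a,\,b+a^{\tau+1})$, and checking it on the other side gives
\[
(a,\,b+a^{\tau+1})(a,b)=(0,\,b+a^{\tau+1}+b+a^\tau a)=(0,0).
\]
Hence $(a,\,b+a^{\tau+1})$ is a two-sided inverse. Alternatively, since $U$ is finite of order $64$, associativity and the existence of one-sided inverses already suffice.

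None of these steps is a real obstacle. The associativity computation is the only place where care is needed, because the cross terms must be matched using additivity of $\tau$.
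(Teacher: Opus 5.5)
Your proof is correct and follows the paper's route exactly: a direct check of associativity by expanding both triple products, the identity $(0,0)$, and the two-sided inverse $(a,\,b+a^{\tau+1})$, which is the paper's $(-a,-b+aa^\tau)$ rewritten in characteristic $2$. Your expansion of $(a,b)\bigl((c,d)(e,f)\bigr)$ is slightly more careful than the paper's, which drops the $f$ from the second coordinate.
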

\proof Choose $(a,b), (c,d)$ and $(e,f)$ from $U$. We have the following: 	
\begin{enumerate}
  \item $[(a,b)(c,d)](e,f)=(a+c,b+d+a^\tau c)(e, f)=(a+c+e,b+d+f+a^\tau c+a^\tau e+c^\tau e)$,
  \medskip
  \item $(a,b)[(c,d)(e,f)]=(a,b)(c+e,d+f+c^\tau e)=(a+c+e,b+d+a^\tau c+a^\tau e+c^\tau e)$,
  \medskip
  \item $(0,0)(a,b)=(a,b)=(a,b)(0,0)$,
  \medskip
  \item $(a,b)(-a,-b+aa^\tau)=(0, 0)=(-a,-b+aa^\tau)(a,b)$.
\end{enumerate}

\medskip

The first two formulas show that the defined multiplication is associative, the third one shows that $(0, 0)$ is the identity element and the fourth one shows that each element in $U$ has an inverse. We conclude that $U$ is a group with respect to the multiplication defined in \eqref{1545r}.

\eprf

\medskip

For each $t\in F^{\times}$, we define a permutation on $U$ in the following way
\begin{equation}\label{1611q}
\varphi_t: U\longrightarrow U, \quad (a,b)\mapsto (ta,t^5b).
\end{equation}
In addition,  the natural extension of $\sigma$ to $U$, also denoted by $\sigma$, is defined in the following way:
\begin{equation}\label{1614t}
\sigma: U\longrightarrow U, \quad (a,b)\mapsto (a^\sigma,b^\sigma)
\end{equation}

\medskip

\begin{lemma}\label{1621y} Keep the notation above. Then $\sigma$ and $\varphi_t$ for each $t\in F^\times$ are automorphisms of the group $U$. In addition, $\sigma^{-1}\varphi_t\sigma=\varphi_{t^\tau}$ for each $t\in F^\times$.

\end{lemma}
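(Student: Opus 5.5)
Both maps are bijections of $U$, so in each case it suffices to check that the map respects the multiplication \eqref{1545r}; the conjugation formula is then a direct composition of maps. Throughout I use that $F$ has characteristic $2$, that $t^7=1$ for $t\in F^\times$, and that $\sigma$ and $\tau=\sigma^2$ are field automorphisms commuting with each other.

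For $\sigma$: it is a bijection because $\sigma$ is an automorphism of $F$ acting coordinatewise. For $(a,b),(c,d)\in U$,
$$\bigl((a,b)(c,d)\bigr)^\sigma=(a^\sigma+c^\sigma,\ b^\sigma+d^\sigma+(a^\tau)^\sigma c^\sigma),$$
while
$$(a,b)^\sigma(c,d)^\sigma=(a^\sigma+c^\sigma,\ b^\sigma+d^\sigma+(a^\sigma)^\tau c^\sigma).$$
These agree because $\sigma\tau=\tau\sigma$, both being powers of the Frobenius map.

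For $\varphi_t$: it is bijective since $t\neq 0$, so multiplication by $t$ and by $t^5$ are invertible. Computing,
$$\varphi_t\bigl((a,b)(c,d)\bigr)=(ta+tc,\ t^5b+t^5d+t^5a^\tau c)$$
and
$$\varphi_t(a,b)\varphi_t(c,d)=(ta+tc,\ t^5b+t^5d+(ta)^\tau tc).$$
Now $(ta)^\tau tc=t^{\tau}t\,a^\tau c$, and $t^\tau=t^4$, so $t^\tau t=t^5$. This is exactly where the exponent $5$ comes from, and it is the only nontrivial check; the main point is to remember that $\tau$ acts as the $4$th power map.

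For the conjugation formula: let $x=(a,b)$, and compose maps as written, applying $\sigma^{-1}=\tau$ first, then $\varphi_t$, then $\sigma$. Then
$$(a,b)\mapsto(a^\tau,b^\tau)\mapsto(ta^\tau,t^5b^\tau)\mapsto\bigl(t^\sigma a,\ (t^5)^\sigma b\bigr),$$
using $\tau\sigma=\sigma^3=1$. Since $(t^5)^\sigma=(t^\sigma)^5$, this map is $\varphi_{t^\sigma}$. Under this left-to-right composition one obtains $t^\sigma$, whereas the statement asserts $t^\tau$. So under the opposite convention, with $\sigma^{-1}\varphi_t\sigma$ meaning apply $\sigma$ first, then $\varphi_t$, then $\sigma^{-1}$, the map is $(a,b)\mapsto(t^\tau a,(t^\tau)^5 b)=\varphi_{t^\tau}$. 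This matches the statement, and I will adopt that (exponential, right-action) convention, checking it against how the paper later writes automorphism actions.

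This convention bookkeeping is the one place an error could creep in.
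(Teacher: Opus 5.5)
Your proof is correct and is essentially the paper's proof. It checks directly that $\sigma$ and $\varphi_t$ respect the multiplication \eqref{1545r}, where the key identity is $t^\tau t=t^5$, and then evaluates $\sigma^{-1}\varphi_t\sigma$ on $(a,b)$ with $\sigma$ applied first, exactly as the paper does in $(\sigma^{-1}\varphi_t\sigma)(a,b)=\sigma^{-1}\varphi_t(a^2,b^2)$. One terminology fix: the convention you adopt is ordinary composition of maps written on the left, which is also what the paper's later formula $C_K(a)=\{\sigma\varphi_{a^3},\sigma^2\varphi_a,{\rm id}\}$ requires; it is not the exponential right-action convention, since that one applies $\sigma^{-1}$ first and gives $\varphi_{t^\sigma}$, as your own first computation shows.
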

	
	\proof For each $(a, b), (c, d)\in U$, we have that

\begin{enumerate}
  \item $\varphi_{t}[(a,b)(c,d)]=(ta+tc,t^5b+t^5d+t^5a^4c)=\varphi_{t}(a,b)\varphi_t(c,d)$,

  \medskip

  \item $\sigma[(a,b)(c,d)]=(a^2+c^2,b^2+d^2+(a^4)^2c^2)=(a^2,b^2)(c^2,d^2)=\sigma(a,b)\sigma(c,d)$,

  \medskip

  \item $(\sigma^{-1}\varphi_{t}\sigma)(a,b)=\sigma^{-1}\varphi_t(a^2,b^2)=\sigma^{-1}(ta^2,t^5b^2)=(t^4a,t^{20}b)=\varphi_{t^\tau}(a,b)$.
\end{enumerate}
These equalities prove the statements in the lemma.
\eprf

\medskip

Let  $p=3$ and $t$ be a generator of $F^\times$.  Set $T=\lg \varphi_{t}\rg$, $P=\lg \sigma\rg$ as  elements of $\aut(U)$, and $K=T\rtimes P$.  By Lemma~ \eqref{1621y}, one can see that  $K$ is a subgroup of $\aut(U)$.  Finally,  take $G=U\rtimes K$.  It is easy to see that $|G|=2^6\cdot 3\cdot 7$ and $P$ is a Sylow $p$-subgroup of $G$.

\medskip

\begin{theorem}\label{1702t} Keep the notation above. Then $|\irr_p(G)|=2$ and  $l_p(G)= 3$.

	\end{theorem}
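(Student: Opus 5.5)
The plan is to use the normal series $1<Z<U<G$, where $Z=\{(0,b)\mid b\in F\}$. From \eqref{1545r}, $Z$ is central in $U$ and $U/Z\cong (F,+)$. The commutator of $(a,0)$ and $(c,0)$ is $(0,a^\tau c+c^\tau a)$; I expect these to generate $Z$, which gives $U'=Z=Z(U)$. By Lemma~\ref{1621y}, $\varphi_t$ acts on $U/Z$ as multiplication by $t$ and on $Z$ as multiplication by $t^5$. Since $\gcd(5,7)=1$, both actions are fixed-point-free, so $T$ permutes the nontrivial characters of $U/Z$ and of $Z$ regularly. The group $K=T\rtimes P$ is Frobenius of order $21$, and $\sigma$ fixes exactly $GF(2)$ in $F$.

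For $|\irr_3(G)|$ I will count layer by layer. First, $\irr(G/U)=\irr(K)$ consists of $3$ linear characters and $2$ characters of degree $3$; this gives two members of $\irr_3(G)$. Second, $G/Z\cong F\rtimes K$, where $K$ acts transitively on the $7$ nontrivial characters of $F$ and the stabilizer of each is a conjugate of $P$. Such a character extends to its inertia group $(U/Z)P$ in three ways (the orders are coprime), so Clifford theory gives $3$ characters of degree $7$. Third, fix $1\neq\lambda\in\irr(Z)$. Its inertia group in $G$ is $UP^g$ for a conjugate of $P$, because $T$ acts regularly on these characters. The form $B_\lambda(a,c)=\lambda(a^\tau c+c^\tau a)$ is alternating on the $3$-dimensional space $F$ over $GF(2)$ and is nonzero because $U'=Z$, so it has rank $2$. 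Hence $\irr(U\mid\lambda)$ consists of exactly $2$ characters of degree $2$. A group of order $3$ cannot move a set of $2$ points, so $P^g$ fixes both, and each extends to $UP^g$. Inducing to $G$ yields only characters of degree $7\cdot 2=14$. As a check, the sum of squares $21+147+7\cdot 6\cdot 14^2/7=1344=|G|$ should confirm that nothing is missed. Thus $|\irr_3(G)|=2$.

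For $l_3(G)$, I first note $N_G(P)=C_U(P)P$, since $[U\cap N_G(P),P]\subseteq U\cap P=1$ and $N_K(P)=P$. Here $C_U(P)=\{(a,b)\mid a,b\in GF(2)\}$ has order $4$. The explicit chain
$$N_G(P)<ZC_U(P)P<UP<G,$$
with orders $12<48<192<1344$, gives $l_3(G)\ge 3$. For the upper bound, I send a subgroup $H\ge N_G(P)$ to the pair $(H\cap U,\,HU)$; a strict inclusion of such $H$ forces a strict increase of the pair, by Dedekind's law. The second coordinate is $UP$ or $G$, since $K$ has no subgroup strictly between $P$ and $K$. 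The first coordinate is a $P$-invariant subgroup of $U$ containing $C_U(P)$.

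The main step is to show there are only three such subgroups: $C_U(P)$, $ZC_U(P)$ and $U$. The module $F/GF(2)$ for $\sigma$ is $2$-dimensional without fixed vectors, hence irreducible; the same holds on $Z$. So the image of $H\cap U$ in $U/Z$ is either $GF(2)$ or $F$, and its intersection with $Z$ is either $GF(2)$ or $Z$. If the image is all of $F$, then $[H\cap U,H\cap U]=[U,U]=Z$, because $Z$ is central, and therefore $H\cap U=U$. The pairs thus lie in a product of a $3$-chain and a $2$-chain, whose longest strict chain has length $3$. Hence $l_3(G)=3$. I expect the main care to be needed in verifying $U'=Z$ and the irreducibility facts, since both counts rest on them.
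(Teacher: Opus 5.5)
Your proof is correct and follows essentially the paper's route. You use the same three layers $\irr(G/U)$, $\irr(G/Z\mid U/Z)$ and $\irr(G\mid Z)$, with the same inertia groups and degrees $3$, $7$, $14$, and the same chain $N_G(P)<N_G(P)Z<N_G(P)U<G$. Your map $H\mapsto(H\cap U,HU)$ together with the irreducibility of $F/GF(2)$ under $\sigma$ plays exactly the role of the paper's additivity of lengths along $1<Z<U<G$ (which the paper only asserts) and its fixed-point congruence $|L\cap Z|\equiv|C_Z(P)|$ modulo $3$.
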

\proof The proof of the theorem is divided into several steps.

\medskip

\textbf{Step 1.}\,  Let $Z=Z(U)$. Then $|\irr_p(G/Z|U/Z)|=0$.

\medskip

It is easy to see that
$$
\{(0,d)|d\in F\}\subseteq Z.
$$
 Also, by formula \eqref{1545r} we have that $(c,d)=(c,0)(0,d)$ for each $c, d\in F$.  Thus,  $(a,b)\in Z$ iff
 $(a,b)(c,0)=(c,0)(a,b)$ for each $c\in F$, which is equivalent to
 $$
 a^\tau c=c^\tau a.
 $$
 If $a\ne 0$, then we have that
 $$
 (\frac{c}{a})^\tau=\frac{c}{a},
 $$
 which is true for any $c\in F$. We get a contradiction as $\tau$ is a nonidentity automorphism of $F$.
 It then follows that  $a=0$, and therefore
 \begin{equation}\label{957c}
  Z=\{(0,d)|d\in F\}.
 \end{equation}

 \medskip

 It is easy to check that $U/Z\cong F^+$, where $F^+$ is the additive group of the field $F$.  Thus,
 \begin{equation}\label{1051t}
 G/Z\cong F^+\rtimes (\lg \varphi_{t}\rg \rtimes \lg \sigma\rg),
 \end{equation}
  where $\varphi_{t}(a)=ta$, and $\sigma(a)=a^2$ for each $a\in F$.

  \medskip

  To complete the proof, it suffices to show that $|\irr_p(H|F^+)|=0$, where $H=F^+\rtimes (\lg \varphi_{t}\rg \rtimes \lg \sigma\rg)$.

\medskip

For each $0\ne a\in F$, one can easily check that
\begin{equation}\label{1021t}
C_K(a)=\{\sigma\varphi_{a^3}, \sigma^2\varphi_a, {\rm id} \},
\end{equation}
which is a Sylow 3-subgroup $P_a$ of $H$.

\medskip

Let $\theta\in \irr(F^+)$ be a nonprincipal irreducible character. For $k\in K$, $\theta^k=\theta$ iff $\theta^k(a)=\theta(a)$ for some generator $a$ of $F^{\times}$ iff  $k\in C_K(a)$ iff $k\in P_a$.

\medskip

It then follows that
 $$
 I_H(\theta)=F^+\rtimes P_a.
 $$
Since $\gcd(|F^+|,|P_a|)=1$, by \cite[Corollary 11.22]{I1} we have that  $\theta$ is extendible to $I_H(\theta)$. Let $\hat{\theta}$ be such an extension. Then by \cite[Corollary 6.17]{I1} we have that
$$
\irr(I_H(\theta)|\theta)=\{\hat{\theta}\xi|\xi\in \irr(P_a)\}.
$$
Then by Clifford correspondence, we have that each irreducible character in $\irr(H|\theta)$ has degree $7$ as $|H: I_H(\theta)|=7$.

\medskip

 We then conclude that
$$
|\irr_p(H|F^+)|\leqslant\sum\limits_{\theta\in \irr(F^+)-1_{F^+}}|\irr_p(H|\theta)|=0.
$$

\medskip

\textbf{Step 2.}\,  $|\irr_p(G|Z)|=0$.

Similarly,  for each $x=(0, d)\in Z$ with $d\ne 0$, we have
\begin{equation}\label{1412t}
C_K(x)=\{\sigma \varphi_{x^2}, \sigma^2\varphi_{x^3}, {\rm id} \},
\end{equation}
which is a Sylow $p$-subgroup of $G$.

\medskip 

For each $a,c\in F^{\times}$, one can easily get that the commutator
\begin{equation}\label{2304c}
	[(a, 0), (ac, 0)]=(0, a^5c+a^5c^4)=(0,a^5(c+c^4)).
	\end{equation}

\medskip

Let $f(X)=X^3+X+1\in F_2[X]$.   Since $f=X^3+X+1$ has no root in $F_2$, we have that $f$ is irreducible in $F_2$. It then follows that $F$ is a splitting field for $f$ over $F_2$ and $F=F_2[c_0]$ for a root $c_0$ of $f$ in $F$.

\medskip

Note that
$$
c_0+c_0^4=c_0(c_0^3+c_0+1)-c_0^2=c_0^2.
$$
Note that $x\mapsto  x^5$ is a permutation on $F^\times$. Thus, there exists a unique  $b\in F^\times $ such that  $b^5=c_0^2$. We have
\begin{equation}
	[(a, 0), (ac_0, 0)]=(0,a^5(c_0+c_0^4))=(0,a^5c_0^2)=(0,(ab)^5).
\end{equation}
and hence
$$
Z=\{(0, a^5)|a\in F^{\times}\}\cup\{(0,0)\}\subseteq U'.
$$
Since $U/Z\cong F^+$,  $U'\subseteq Z$.  We conclude that  $U'=Z$.

\medskip

Deducing as before, we obtain that  for any $\theta\in \irr(Z)-1_Z$, $I_K(\theta)=Q_\theta$ for some Sylow $p$-subgroup $Q_\theta$ of $G$. Thus,
$$
I_G(\theta)=I_K(\theta)U=U\rtimes Q_\theta.
$$


 \medskip

 Now let  $N=\ker(\theta)$. Then one can easily see that
 $$
 |N|=4, \quad {\rm and}\quad  (U/N)'=Z/N.
 $$
 Thus $U/N$ has $8$ linear characters and none of them lies over $\theta$.  In addition,
 $U/N$ has two nonlinear irreducible characters both of which have degree $2$. Recall that  $\irr(Z/N)=\{1_{Z/N}, \theta\}$, thus these two  nonlinear irreducible characters  lie over $\theta$.

 \medskip

 It then follows that
  $$
  |\irr(U|\theta)|=|\irr(U/N|\theta)|=2.
  $$
In particular,  the characters in $\irr(U|\theta)$ are $Q_\theta$-invariant as $|Q_\theta|=3$. Thus, each character in $\irr(U|\theta)$ can be extendible to $I_G(\theta)$.  Furthermore,
$$
\{\eta(1)|\eta\in \irr(I_G(\theta)|\theta)\}=\{2\}.
$$
We conclude that
$$
|\irr_p(G|Z)|\leqslant \sum\limits_{\theta\in \irr(Z)-1_Z}|\irr_p(G|\theta)|=\sum\limits_{\theta\in \irr(Z)-1_Z}|\irr_p(I_G(\theta)|\theta)|=0.
$$

\textbf{Step 3.}\,  $|\irr_p(G)|=2$.

Recall that $K\cong Z_7\rtimes Z_3$ and thus $|\irr_p(K)|=2$. As $G/U\cong K$, we have that $|\irr_p(G/U)|=2$. As a result of the first two steps, we have that
$$
|\irr_p(G)|=|\irr_p(G/U)|+|\irr_p(G/Z|U/Z)|+|\irr_p(G|Z)|=2.
$$

\medskip

\textbf{Step 4.}\,  $l_p(G)= 3$.

Note that $N_G(P)\cap Z=\{(0,d)|d=0,1\}<Z$. Then we have that
$$
N_G(P)<N_G(P)Z.
$$
By the structure of $G/Z$ in \eqref{1051t}, we have that
$$
(N_G(P)Z\cap U)/Z=N_{G/Z}(PZ/Z)\cap U/Z<U/Z.
$$
This yields that  $N_G(P)Z<N_G(P)ZU=N_G(P)U$. In addition, we have that
$$
N_G(P)U/U=N_{G/U}(PU/U)=PU/U<G/U.
$$
Thus,
$$
N_G(P)<N_G(P)Z<N_G(P)U<G
$$
 is a strictly ascending chain of subgroups between $N_G(P)$ and $G$ with length $3$, which yields that $l_p(G)\ge 3$.
 
 \medskip

Now we show that $l_p(G)=3$. It is enough for us to show that
\begin{equation}\label{1700t}
l(N_G(P),N_G(P)Z)=l(N_G(P)Z,N_G(P)U)=l(N_G(P)U,G)=1.
\end{equation}

\medskip 

Suppose $L$ is a subgroup of $N_G(P)Z$ satisfying $N_G(P)<L$.  Then  $L\cap Z$ is a subgroup of $Z$ which is $P$-invariant.  It then follows that
$$
|L\cap Z|\equiv |C_{L\cap Z}(P)|=|C_Z(P)|=2 \mod\ 3.
$$
Thus,  $|L\cap Z|=5$ or $|L\cap Z|=8$ which yields that  $L\cap Z=Z$ as $L\cap Z$ is a subgroup of $Z$.  

\medskip 

In addition, we have 
$$
L=N_G(P)Z\cap L=N_G(P)(L\cap Z)=N_G(P)Z, 
$$
which implies that $l(N_G(P),N_G(P)Z)=1$.

\medskip 

Now we take $\bar{G}=G/Z$, $\bar{P}=PZ/Z$,  and $\bar{U}=U/Z$. Deducing as before we have
$$
l(N_G(P)Z,N_G(P)U)=l(N_G(P)Z/Z,N_G(P)U/Z)=l(N_{\bar{G}}(\bar{P}),N_{\bar{G}}(\bar{P})\bar{U})=1.
$$

Finally, noting that $|G:N_G(P)U|=7$ is a prime, we can get $l(N_G(P)U,G)=1$. Thus, equality \eqref{1700t} follows and the proof is complete.

 \eprf
 
\medskip

\section*{Acknowledgment}
The first author is supported by the Natural Science Foundation of China (No. 12371019). The authors are grateful to Professor Ping Jin for his valuable comments. 

\medskip 

\section*{Statement on the use of AI} 

During the preparation of this manuscript, the authors used OpenAI's ChatGPT (GPT-6). It gave the concrete counterexample, but it did not give a detailed or convincing explanation why the given example is a counterexample. The verification of this example was done by the authors based on the ideas mentioned at the beginning. The authors  are responsible for mathematical arguments, results, and final content of the paper.

\end{document}